%
%
\documentclass[10pt]{amsart}
\usepackage{amsthm,amsmath,amssymb,amsfonts}
\usepackage[american]{babel}
\usepackage{cite}
\usepackage{url}


\newtheorem*{theorem*}{Theorem}
\newtheorem{lemma}{Lemma}

\newtheorem{claim}{Claim}

\theoremstyle{definition}

\theoremstyle{remark}

\newtheorem*{problem*}{Problem}
\newtheorem*{problemdash*}{Problem$^{\prime}$}


\begin{document}


\newcommand{\NU}[1]{\text{\rm V}(#1)}
\newcommand{\cen}[2]{\text{\rm C}_{#1}(#2)}
\newcommand{\zen}[1]{\text{\rm Z}(#1)}
\newcommand{\opns}[1]{\text{\rm O}_{p^{\prime}}(#1)}
\newcommand{\lara}[1]{\langle{#1}\rangle}
\newcommand{\paug}[2]{\varepsilon_{#1}(#2)}
\newcommand{\ZZ}{\mathbb{Z}}
\newcommand{\ZZt}{\tilde{\mathbb{Z}}}
\newcommand{\QQ}{\mathbb{Q}}
\newcommand{\CC}{\mathbb{C}}
\newcommand{\irr}[1]{\text{\rm irr}(#1)}
\newcommand{\abs}[1]{\lvert#1\rvert}
\newcommand{\supp}[1]{\text{\rm supp}(#1)}


\title[Torsion units in integral group rings]
{On torsion units in integral group rings of Frobenius groups}

\author{Martin Hertweck}
\address{Universit\"at Stuttgart, Fachbereich Mathematik,
IGT, 
Pfaffenwald\-ring 57, 70569 Stuttgart, Germany}
\email{hertweck@mathematik.uni-stuttgart.de}



\subjclass[2000]{Primary 16S34, 16U60; Secondary 20C05}


\date{\today}

\keywords{integral group ring, torsion unit, Zassenhaus conjecture}

\begin{abstract}
For a finite group $G$, let $\ZZt$ be the semilocalization of $\ZZ$
at the prime divisors of $|G|$. If $G$ is a Frobenius group with
Frobenius kernel $K$, it is shown that each torsion unit in the 
group ring $\ZZt G$ which maps to the identity under the natural 
ring homomorphism $\ZZt G\rightarrow\ZZt G/K$ is conjugate
to an element of $G$ by a unit in $\ZZt G$.
\end{abstract}

\maketitle

\section{Introduction}\label{sec:intro}

A conjecture of H.~Zassenhaus from the 1970s asserts that
for a finite group $G$, every torsion unit in its integral group ring
$\ZZ G$ is conjugate to an element of $\pm G$ by a unit in the 
rational group ring $\QQ G$.
For known results on this conjecture the reader is referred to
\cite[Chapter~5]{Seh:93}, \cite[\S\,8]{Seh:03} and the more recent
work \cite{He:05,He:06}, \cite{CaMadelRio:12}.

The outstanding result in the field is Weiss's proof \cite{Wei:88,Wei:91}
that for a nilpotent group $G$, the conjecture is true. 
Weiss even proved (for nilpotent $G$) that a finite subgroup $H$ of 
$\NU{\ZZ G}$, the group of augmentation $1$ units of $\ZZ G$, 
is conjugate to a subgroup of $G$ by a unit in $\QQ G$, not without 
demonstrating that this, in his own words,  
``is still a rather crude description of the actual situation.''
We just mention that his results show that when the group $G$ is of
prime power order, or the subgroup $H$ is cyclic, conjugacy of $H$ to
a subgroup of $G$ already takes place in the units of $\ZZ_{p}G$, for 
each rational prime $p$, where $\ZZ_{p}$ denotes the $p$-adic 
integers \cite[p.~184]{Wei:91}.

We prove the following result.
Always, $G$ denotes a finite group, and we 
then write $\ZZt$ for the semilocalization of $\ZZ$ at the prime divisors
of $|G|$. (Explicitly, $\ZZt$ consists of all rational numbers with 
denominator not divisible by any prime dividing $|G|$.)

\begin{theorem*}
Let $G$ be a Frobenius group, with Frobenius kernel $K$. Then any
torsion unit in $\ZZt G$ which maps to the identity under the natural 
ring homomorphism $\ZZt G\rightarrow\ZZt G/K$ is conjugate to an 
element of $G$ by a unit in $\ZZt G$.
\end{theorem*}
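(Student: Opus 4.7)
My plan is first to exploit the hypothesis $\bar u = 1$ and the special structure of $G$-conjugacy classes in a Frobenius group to force the partial augmentations of $u$ on every $G$-class not contained in $K$ to vanish; and then, using that $K$ is nilpotent (Thompson), to combine this with Weiss's theorem to conjugate $u$ to an element of $K$ by a unit of $\ZZt G$.

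The key input is the well-known fact about Frobenius groups that, for $h \in H \setminus\{1\}$, the whole coset $hK$ is a single $G$-conjugacy class. Indeed, the fixed-point-free action of $H$ on $K$ makes the map $k \mapsto k h k^{-1} h^{-1}$ a bijection of $K$ (injectivity: two values $k_1,k_2$ with the same commutator give $k_2^{-1}k_1 \in \cen{G}{h} \cap K \subseteq H \cap K = \{1\}$), so every $kh$ is $G$-conjugate to $h$. Consequently each $G$-class $C \subseteq G \setminus K$ is a disjoint union $\bigsqcup_{h' \in \bar C} h' K$, where $\bar C$ is a non-trivial $H$-class in $H$. Writing $u = \sum_g a_g g$, the hypothesis $\bar u = 1$ reads $\sum_{g \in h'K} a_g = \delta_{h',1}$ for each $h' \in H$, so summing over $h' \in \bar C$ yields $\paug{C}{u} = 0$; the same argument applied to every power gives $\paug{C}{u^d} = 0$ for all $d$ and all $C \not\subseteq K$.

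With the partial augmentations of $u$ and its powers supported entirely on $G$-classes in $K$, I would reduce to the case where $u$ has prime-power order via the usual decomposition of torsion units and argue prime by prime. For primes $p \mid \abs{H}$, the vanishing combined with Weiss's $p$-adic conjugacy for cyclic $p$-subgroups of $\NU{\ZZ_p G}$ forces the $p$-part of $u$ to be $1$, since otherwise it would have to conjugate to a nontrivial element of $G$ of $p$-power order lying in $K$, which is impossible when $p \nmid \abs{K}$. For primes $p \mid \abs{K}$, the invertibility of $\abs{H}$ in $\ZZ_{(p)}$ (by coprimality) lets one exploit the crossed-product structure of $\ZZ_{(p)}G$ over $\ZZ_{(p)}K$ to push the $p$-part of $u$ into $\ZZt K$ by conjugation, after which Weiss's theorem for the nilpotent $K$ finishes the job inside $(\ZZt K)^\times \subseteq (\ZZt G)^\times$.

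The main obstacle will be the final assembly: promoting these $p$-local conjugations to a single conjugation by a unit of $\ZZt G$ (rather than only of $\QQ G$). This requires a careful local-to-global argument using the uniqueness-up-to-centralizer of the conjugators supplied by Weiss, together with the rigidity of complements in Frobenius groups (Schur--Zassenhaus), to arrange that the local choices at different primes are compatible and conjugate $u$ to a common element of $K$.
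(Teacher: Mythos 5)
Your opening observations are sound: the vanishing of the partial augmentations of $u$ (and its powers) on all $G$-classes outside $K$ does follow from the coset structure of a Frobenius group (though, to be precise, $hK$ is merely \emph{contained} in the $G$-class of $h$; that class equals $\bigsqcup_{h'} h'K$ over the $H$-class of $h$, as you note in the next sentence). But the plan breaks down precisely where you are vaguest.

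The central gap is the step for primes $p\mid\abs{K}$, where you assert that the crossed-product structure of $\ZZ_{(p)}G$ over $\ZZ_{(p)}K$ ``lets one push the $p$-part of $u$ into $\ZZt K$ by conjugation.'' This is not an argument, and it is in fact the crux of the entire theorem. What Weiss's rigidity theorem gives a priori is only \emph{rational} conjugacy of the $p$-part $u$ to some $x\in N$, the Sylow $p$-subgroup of $K$. Upgrading this to conjugacy by a unit of a $p$-adic order $RG$ (which, via the semilocal machinery of \S\,2, is what is actually needed) is the content of the paper's Claim~1, and its proof is substantial: one writes $1=e_1+\dots+e_s$ with primitive idempotents $e_i$ of $RL$ (where $L$ is the $p$-complement in $K$), uses Green's indecomposability via the normal $p$-complement $L$ of $\cen{G}{x}$ to see that $_G(RG)_x=\bigoplus RGe_i$ is a decomposition into indecomposable $R(G\times C)$-lattices, applies Weiss's theorem to show $_G(RG)_u$ is a summand of a sum of copies of $_G(RG)_x$, and then matches the indecomposable summands via Krull--Schmidt using partial-augmentation relations between $G$-classes inside $K$ and $L$-classes (a finer version of your observation). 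Nothing in the mere separability of $\ZZ_{(p)}G$ over $\ZZ_{(p)}K$ yields this.

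There are further problems downstream. Even granting $u\in\ZZt K$, Weiss's theorem gives $\QQ K$- (or, for cyclic subgroups, $\ZZ_p K$-) conjugacy, not conjugacy inside $(\ZZt K)^\times$ as you assert. The ``reduce to prime-power order'' step is also not free: writing $w=uy$ with $u$ the $p$-part and $y$ the $p'$-part, after conjugating $u$ to $x\in N$ by $\mu$ the $p'$-part becomes $y^\mu\in\cen{RG}{x}$, which is a priori not in $RL$; arranging $y^\mu\in RL$ requires a careful choice of the bimodule isomorphism, and the paper then needs an induction on $\abs{G}$ (for $x\notin\zen{N}$, passing to $G/\zen{N}$) plus a Berman--Higman argument (for $x\in\zen{N}$, $y\in\zen{L}$) to conclude. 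By contrast, the local-to-global assembly you single out as the ``main obstacle'' is handled by standard machinery and is actually the least of the difficulties.
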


That Frobenius kernels are nilpotent was proved by
J.~G.~Thompson (in his thesis).

Sehgal, apparently led by Weiss's work, raised the following question
\cite[Problem~35]{Seh:93}. 
\begin{problem*}
Let $N$ be a normal nilpotent subgroup of $G$. Then, is it true that
any torsion unit in $\ZZ G$ which maps to the identity under the natural
ring homomorphism $\ZZ G\rightarrow\ZZ G/N$ is conjugate to an element
of $G$ by a unit in $\QQ G$?
\end{problem*}

So we give an affirmative answer to this problem for Frobenius groups $G$.
In doing so, it will be important to take care of where conjugacy takes
place, and our result is really about conjugacy in all the $p$-adic 
group rings of $G$ (cf.\ \cite{Wei:91}, and the general remarks in 
\S\,\ref{sec:red}).
We first deal with the following problem, posed by Sehgal
at the 2002 conference ``Around group rings'' in Jasper, Alberta
\cite{AGR:02}.

\begin{problemdash*}
Let $N$ be a normal $p$-subgroup of $G$. Then, is it true that any 
torsion unit in $\ZZ G$ which maps to the identity under the natural
ring homomorphism $\ZZ G\rightarrow\ZZ G/N$ is conjugate to an element
of $G$ by a unit in $\ZZ_{p}G$?
\end{problemdash*}

A modest contribution to that was given in \cite{He:05,He:06}.
Here, we follow the approach given there, which is briefly  
described in \S\,\ref{sec:indec}. The proof of the theorem is then 
given in \S\,\ref{sec:proof}.

Apart from conjugacy questions, it is reasonable to look for isomorphisms.
In this context, the following is well known. Let $N$ be a normal 
nilpotent subgroup of $G$ and $U$ a finite group of units in $\ZZt G$ 
mapping to the identity under the map $\ZZt G\rightarrow\ZZt G/N$. 
Then $U$ is nilpotent. If $p$ is a prime dividing the order of $U$,
and $U_{p}$ and $N_{p}$ denote the Sylow $p$-subgroups of $U$ and $N$,
respectively, then $U_{p}$ is the kernel of the natural homomorphism
$U\rightarrow\NU{\ZZt G/N_{p}}$. (See \cite[Lemma~2.6]{ClWe:00} for details.)
When $N$ is a $p$-group, then it follows from \cite{Wei:88} that $U$
is conjugate to a subgroup of $N$ by a unit in $\QQ G$ 
(see \cite[Proposition~4.2]{He:06}). Thus in the general case, $U$ 
is isomorphic to a subgroup of $N$. Whether $U$ is conjugate
to a subgroup of $N$ by a unit in $\QQ G$ is Problem~36 in \cite{Seh:93}.
We were not able to answer this question in the present case
(that of Frobenius groups $G$).

We remark that torsion units in $\ZZt G$ share various properties 
with those of $\ZZ G$. We just mention that the partial augmentations
of torsion units in $\ZZt G$ are rational integers (this follows, for
example, immediately from \cite[Lemma~1]{Wei:91} since character values
are algebraic numbers).

Already known results on torsion units in the integral group ring of 
a Frobenius group $G$ include the following. Let $K$ be the Frobenius
kernel and $H$ a Frobenius complement of the Frobenius group $G$. 
Let $u$ be a torsion unit
in $\NU{\ZZ G}$. Then the order of $u$ divides either $|K|$ or $|H|$
(see \cite[Theorem~2.1]{JuPM:00}; for a related result, see 
\cite[Proposition~4.3]{Ki:06}). Hence $u$ either maps to the identity
under the natural map $\ZZ G\rightarrow\ZZ G/K$, or is conjugate to a
unit in $\ZZ H$ by a unit in $\QQ G$ (see \cite[Lemma~2.5]{DoJu:96} or 
\cite[(37.13)]{Seh:93}). The structure of Frobenius complements
is well known, so one might hope that for them, the Zassenhaus 
conjecture can be verified ``by inspection.'' 
For example, when $|H|$ is odd, then $H$ is metacyclic of the type 
treated in \cite{Val:94}, and any finite subgroup of
$\NU{\ZZ H}$ is conjugate to a subgroup of $H$ by a unit in $\QQ H$.
In general, a torsion unit in $\NU{\ZZ H}$ of prime power order
is conjugate to an element of $H$ by a unit in $\QQ H$.
This has been proved for $H$ solvable in \cite[Theorem~5.1]{DoJu:96},
and for ``most'' non-solvable $H$ in \cite{DoJuPM:97}, with the 
remaining case finally dealt with in \cite{BoHe:08}.

\section{Generalities about semilocal and $p$-adic conjugacy}\label{sec:red}

For convenience of the reader, we list a few basic facts about
conjugacy of finite subgroups in the group of units of a semilocal group
ring which are relevant to this paper.
Let $G$ be a finite group and $S$ a commutative ring.
There is a very general procedure for converting questions about 
conjugacy of finite subgroups of $\NU{SG}$ to questions about bimodules
(cf.\ \cite[(1.2)]{RoSc:87}). To illustrate, let $\alpha$ be a
group homomorphism from a finite group $H$ into $\NU{SG}$, and form
the right $S(G\times H)$-module $_{1}(SG)_{\alpha}$, which is $SG$ 
with group action $m(g,h)=g^{-1}m(h\alpha)$ for $m\in SG$, 
$g\in G$ and $h\in H$.
(All modules are understood to be $S$-linear.) Of course, this 
module can be viewed as an $SG^{\text{op}}$-$SH$ bimodule. 
Two such homomorphisms 
$\alpha,\alpha^{\prime}\colon H\rightarrow\NU{SG}$ are called 
{\em $S$-equivalent} (in \cite{Wei:91}) if a unit $u$ in $SG$ exists 
which satisfies 
$h\alpha=(h\alpha^{\prime})^{u}=u^{-1}(h\alpha^{\prime})u$ 
for all $h\in H$. Then it is easily seen that 
\[ \text{$\alpha$ and $\alpha^{\prime}$ are $S$-equivalent if and only 
if $_{1}(SG)_{\alpha}\cong{}_{1}(SG)_{\alpha^{\prime}}$.} \]
It is essential to spell out the correspondence explicitly. Suppose 
$\varphi\colon{}_{1}(SG)_{\alpha}\rightarrow{}_{1}(SG)_{\alpha^{\prime}}$
is an isomorphism; then $\varphi$ is given by right multiplication
with the unit $u=\varphi(1)$ of $SG$, and $u$ is a conjugating unit
as required for $S$-equivalence of $\alpha$ and $\alpha^{\prime}$.

The case of our interest is when $S=\ZZt$, the semilocalization of
$\ZZ$ at the prime divisors of $|G|$. 
Then conjugacy questions can be studied using general techniques 
from the local and global theory of integral representations
without going into complicated questions about locally free 
class groups (a general reference is \cite[Chapter~4]{CuRe:81}). 
Assume that $\alpha$ and 
$\alpha^{\prime}$ are homomorphisms from $H$ into $\NU{\ZZt G}$.
Note that for a coefficient ring $R$ containing $\ZZt$,
these homomorphisms can be viewed as homomorphisms into 
$\NU{RG}$. First, we have
$_{1}(\ZZt G)_{\alpha}\cong{}_{1}(\ZZt G)_{\alpha^{\prime}}$
if and only if 
$_{1}(\ZZ_{(p)}G)_{\alpha}\cong{}_{1}(\ZZ_{(p)}G)_{\alpha^{\prime}}$
for all prime divisors $p$ of $|H|$, where $\ZZ_{(p)}$ is the
localization of $\ZZ$ at $p$.
This depends on \cite[(31.15)]{CuRe:81} and 
Swan's theorem \cite[(32.1)]{CuRe:81}, as explained in 
\cite[Lemma~2.9]{He:05} (and the corollaries following it).
Second, we have 
$_{1}(\ZZ_{(p)}G)_{\alpha}\cong{}_{1}(\ZZ_{(p)}G)_{\alpha^{\prime}}$
if and only if 
$_{1}(\ZZ_{p}G)_{\alpha}\cong{}_{1}(\ZZ_{p}G)_{\alpha^{\prime}}$
(see \cite[(30.17)]{CuRe:81}). And the latter happens if
$_{1}(RG)_{\alpha}\cong{}_{1}(RG)_{\alpha^{\prime}}$ for a 
$p$-adic ring $R$, which, by definition, is the integral closure of 
$\ZZ_{p}$ in a finite extension field of the $p$-adic field $\QQ_{p}$
(see \cite[(30.25)]{CuRe:81}).

We shall use the following notation.
A finite cyclic group $C$ with (abstract) generator $c$ is fixed.
This group will keep in the background. All torsion units to be
considered are assumed to have order dividing the order of $C$.
When $u$ is a torsion unit in $\NU{SG}$, then 
$_{G}(SG)_{u}$ will denote the $S(G\times C)$-module
$_{1}(SG)_{\alpha}$, where $\alpha$ is the homomorphism
$C\rightarrow\NU{SG}$ which maps $c$ to $u$. 
In this way, we can compare the bimodules associated to various 
torsion units (of the same order) while avoiding to introduce
the relevant homomorphisms explicitly.
On occasion, for a subgroup $N$ of $G$, we also write $_{N}(SG)_{u}$
for $_{G}(SG)_{u}$ viewed as an $S(N\times C)$-module by restriction.
Also, for a direct summand $M$ of $_{G}(SG)_{u}$ (which, as an 
$S$-module, possibly may be considered as a summand of another 
bimodule), we write $_{G}M_{u}$ to emphasize the action to be 
considered.

\section{Indecomposable summands of certain bimodules}\label{sec:indec}

We recall some ideas from \cite[\S\S\,4,~5]{He:06} and \cite[\S\,4]{He:05}.
Let $N$ be a normal $p$-subgroup of the finite group $G$, and suppose 
that a non-trivial
torsion unit $u$ in $\NU{\ZZt G}$ is given which maps to the 
identity under the natural map $\ZZt G\rightarrow \ZZt G/N$.
Then $u$ is of order a power of $p$. For a Sylow $p$-subgroup $P$ of $G$, 
the bimodule $_{P}(\ZZ_{p}G)_{u}$ is a permutation lattice,
as a consequence of a theorem of Weiss \cite{Wei:88}.
It follows that $u$ is conjugate to an element $x$ of $N$
by a unit in $\QQ G$ (see \cite[Proposition~4.2]{He:06}). Moreover,
$_{N}(\ZZ_{p} G)_{u}\cong{}_{N}(\ZZ_{p} G)_{x}$ 
(see \cite[Claim~5.1]{He:06}), and so (see \cite[Claim~5.2]{He:06},
\cite[Corollary~3.2]{He:05}):
\begin{equation}\label{eq1}
_{G}(\ZZ_{p}G)_{u} \text{ is a direct summand of a
direct sum of copies of } _{G}(\ZZ_{p}G)_{x}.
\end{equation}

It will be convenient to work more generally with a $p$-adic ring $R$ 
as coefficient ring. 
It will be of importance that for a primitive idempotent $e$ in 
$R\cen{G}{x}$, the summand $RGe$ of 
$_{G}(RG)_{x}$ is indecomposable since its endomorphism ring 
$e(RG)^{\lara{x}}e$ is local (cf.\ \cite[\S\,4]{He:05}).
(Here, we write as usual $(RG)^{\lara{x}}$ for the fixed points in $RG$ 
under the conjugation operation of $\lara{x}$.)

We also recall the following simple fact (see \cite[Claim~4.3]{He:05}).
\begin{lemma}\label{elfact3}
Let $L$ be a normal subgroup of $G$ and let $e$ and $f$ be primitive 
idempotents in $RL$. Suppose that the $RG$-modules $RGe$ and $RGf$ 
have a common (non-zero) direct summand. Then $RGe\cong RGf$.
More precisely, $f=e^{gv}$ for some $g\in G$ and $v\in (RL)^{\times}$.
\end{lemma}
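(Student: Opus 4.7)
The plan is to work at the level of $RL$-modules and apply Krull--Schmidt. Fix a transversal $T$ of $L$ in $G$. Since $L$ is normal, $RG=\bigoplus_{g\in T}RLg$, so as an $RL$-module
\[
RGe \;=\; \bigoplus_{g\in T} RLg\cdot e \;=\; \bigoplus_{g\in T} RL(geg^{-1})\cdot g,
\]
and right multiplication by $g$ gives an $RL$-isomorphism $RL(geg^{-1})\cong RL(geg^{-1})g$. Normality of $L$ makes each $geg^{-1}$ a primitive idempotent of $RL$; since $R$ is a $p$-adic ring the $R$-order $RL$ is semiperfect, so $\mathrm{End}_{RL}(RL(geg^{-1}))\cong(geg^{-1})RL(geg^{-1})$ is local and each summand $RL(geg^{-1})$ is indecomposable. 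The same discussion applies to $RGf$.

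By hypothesis $RGe$ and $RGf$ share a non-zero $RG$-summand, which is a fortiori a non-zero common $RL$-summand. Because $RL$ is module-finite over the complete local ring $R$, the Krull--Schmidt theorem holds for finitely generated $RL$-modules; comparing the two decompositions above therefore forces $RL(geg^{-1})\cong RL(hfh^{-1})$ for some $g,h\in T$.

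Over the semiperfect ring $RL$, isomorphism of principal indecomposable projectives is equivalent to unit-conjugacy of the defining idempotents, so $hfh^{-1}=v^{-1}(geg^{-1})v$ for some $v\in(RL)^{\times}$. This rearranges to $f=u^{-1}eu$ with $u=g^{-1}vh$; writing $u=(g^{-1}h)(h^{-1}vh)$ and setting $g_{0}:=g^{-1}h\in G$, $v^{\prime}:=h^{-1}vh\in(RL)^{\times}$ (a unit again by normality of $L$), we obtain $f=e^{g_{0}v^{\prime}}$, and right multiplication by $g_{0}v^{\prime}\in(RG)^{\times}$ supplies the desired isomorphism $RGe\cong RGf$. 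The only delicate point is this last factorisation: producing a conjugator of the prescribed form $gv$ with $g\in G$ and $v\in(RL)^{\times}$ rather than merely a unit of $RG$. Normality of $L$ enters twice, both in pulling each $geg^{-1}$ back into $RL$ and in rewriting $g^{-1}vh$ in the required product form.
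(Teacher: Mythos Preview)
Your argument is correct. The paper does not prove this lemma here but simply cites \cite[Claim~4.3]{He:05}; your restriction-to-$RL$ plus Krull--Schmidt argument is exactly the standard route and almost certainly coincides with the cited proof. One small point you might make explicit: after obtaining $RL(geg^{-1})\cong RL(hfh^{-1})$, the passage to unit-conjugacy of the idempotents uses that $RL(1-geg^{-1})\cong RL(1-hfh^{-1})$ as well, which follows by cancellation (Krull--Schmidt) from $RL=RL(geg^{-1})\oplus RL(1-geg^{-1})$ and the analogous decomposition for $hfh^{-1}$. With that caveat, everything is in order.
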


We obtain the following lemma (cf.\ \cite[Lemma~4.6]{He:05}).
\begin{lemma}\label{lem1}
Keep previously introduced notation.
Suppose that for a subgroup $L$ of $\cen{G}{x}$ which is normal in $G$,
there exist idempotents $e_{1},\dotsc,e_{s}$ in $RL$ such that 
$1=e_{1}+\ldots+e_{s}$ is an orthogonal decomposition into
primitive idempotents of $R\cen{G}{x}$. Then there exist
$g_{1},\dotsc,g_{s}\in G$ such that 
$_{G}(RG)_{u}\cong RGe_{1}^{g_{1}}\oplus\ldots\oplus RGe_{s}^{g_{s}}$ 
where each summand $RGe_{i}^{g_{i}}$ is considered as a submodule of
$_{G}(RG)_{x}$. In other words, $u$ is conjugate to 
$\sum_{j}e_{j}x^{g_{j}^{-1}}$ by a unit of $RG$.
\end{lemma}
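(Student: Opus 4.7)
The plan is to reduce to a $p$-adic coefficient ring $R$, decompose $_G(RG)_u$ into indecomposable summands by combining \eqref{eq1} with Krull--Schmidt, and then realise those summands as $RGe_{i}^{g_{i}}$ inside $_G(RG)_x$ with the help of Lemma~\ref{elfact3}. By the correspondence recalled in \S\,\ref{sec:red}, the asserted conjugacy is equivalent to the bimodule isomorphism $_G(RG)_u\cong{}_G(RG)_{u^{\prime}}$, where $u^{\prime}=\sum_{j}e_{j}x^{g_{j}^{-1}}$, so it suffices to establish the latter over a fixed $p$-adic ring $R$.

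First I would decompose the ``target'' bimodule. Since $L\le\cen{G}{x}$, each $e_{i}$ commutes with $x$, so right multiplication by $e_{i}$ is an $R(G\times C)$-endomorphism of $_G(RG)_x$ and
\[
_G(RG)_x \;=\; RGe_{1}\oplus\dots\oplus RGe_{s},
\]
each summand indecomposable because $e_{i}(RG)^{\lara{x}}e_{i}$ is local (the remark preceding Lemma~\ref{elfact3}). Normality of $L$ in $G$ ensures $e_{i}^{g}\in RL\subseteq R\cen{G}{x}$ for every $g\in G$, and primitivity is preserved by conjugation, so $RGe_{i}^{g}$ is likewise an indecomposable direct summand of $_G(RG)_x$. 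Next, \eqref{eq1} over $R$ together with the Krull--Schmidt theorem (applicable since $R$ is complete and the indecomposables $RGe_{i}$ have local endomorphism rings) yields
\[
_G(RG)_u \;\cong\; \bigoplus_{i=1}^{s}(RGe_{i})^{a_{i}}
\]
for some non-negative integers $a_{i}$, and comparing $R$-ranks (both equal to $|G|$) gives $\sum_{i}(a_{i}-1)\,\mathrm{rank}_{R}(RGe_{i})=0$.

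To pin down the multiplicities I would use that $u$ is already $\QQ G$-conjugate to $x$ (available from the argument underlying \eqref{eq1}), so after base change to the field of fractions $K$ of $R$ one has $_G(KG)_u\cong{}_G(KG)_x$; matching multiplicities of simple $K(G\times C)$-constituents in the two decompositions forces the multiplicity in $_G(RG)_u$ of each $R$-isomorphism class of indecomposable summand to coincide with its multiplicity in $_G(RG)_x$. When different indices $i\ne j$ yield isomorphic summands $RGe_{i}\cong RGe_{j}$, Lemma~\ref{elfact3} furnishes $g\in G$ and $v\in(RL)^{\times}$ with $e_{j}=e_{i}^{gv}$, and absorbing $v$ into the left $RG$-action allows an ``extra'' copy of $RGe_{i}$ in the decomposition of $_G(RG)_u$ to be rewritten as $RGe_{j}^{g_{j}}$ for a suitable $g_{j}\in G$. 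I expect this last coordination step to be the main obstacle: using Lemma~\ref{elfact3} to select the $g_{i}$'s so that the abstract Krull--Schmidt decomposition of $_G(RG)_u$ is realised as $\bigoplus_{i}RGe_{i}^{g_{i}}$ with each summand sitting inside $_G(RG)_x$. Once that identification is in place, the correspondence of \S\,\ref{sec:red} converts the bimodule isomorphism into a unit of $RG$ conjugating $u$ to $\sum_{j}e_{j}x^{g_{j}^{-1}}$, as required.
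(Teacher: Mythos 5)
Your overall framework is the right one — invoke \eqref{eq1} and Krull--Schmidt to decompose $_{G}(RG)_{u}$ into summands isomorphic to the $RGe_{i}$, then use Lemma~\ref{elfact3} to realise those summands as $RGe_{i}^{g_{i}}$ — but the step where you ``pin down the multiplicities'' has a genuine gap, and it is precisely the step you yourself flag as the main obstacle.

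The $K$-base change argument does not work. Over $K$ you obtain $\bigoplus_{i}(KGe_{i})^{a_{i}}\cong\bigoplus_{i}KGe_{i}$ as $K(G\times C)$-modules, but the $KGe_{i}$ are not simple in general and distinct $RGe_{i}, RGe_{j}$ may well have $KGe_{i}\cong KGe_{j}$ (nonisomorphic lattices in the same $K$-representation). So equality of simple $K(G\times C)$-constituents does not determine the multiplicities $a_{i}$ of $R$-isomorphism classes, and the rank count $\sum(a_{i}-1)\mathrm{rank}_{R}(RGe_{i})=0$ is weaker still. The missing idea in the paper is elementary but decisive: forget the $C$-action. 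Both $_{G}(RG)_{u}$ and $_{G}(RG)_{x}$ have underlying left $RG$-module equal to $RG$ itself (since $u$ and $x$ are units, right multiplication is an $RG$-automorphism). Hence, writing $_{G}(RG)_{u}\cong RGf_{1}\oplus\dots\oplus RGf_{t}$ with each $f_{i}\in\{e_{1},\dots,e_{s}\}$, one has $RGe_{1}\oplus\dots\oplus RGe_{s}\cong RGf_{1}\oplus\dots\oplus RGf_{t}$ as $RG$-modules, and Krull--Schmidt \emph{for $RG$-modules} forces $s=t$ and a one-to-one matching of summands. Lemma~\ref{elfact3} is then applied exactly to a matched pair $(e_{i},f_{i})$ of idempotents whose $RG$-modules share a common summand, yielding $f_{i}=e_{i}^{g_{i}v}$, and $RGe_{i}^{g_{i}v}\cong RGe_{i}^{g_{i}}$ as submodules of $_{G}(RG)_{x}$ because conjugating by $v\in(RL)^{\times}$ is an automorphism of the $R(G\times C)$-module $_{G}(RGe_{i}^{g_{i}})_{x}$. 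This both establishes the multiplicity matching and produces the $g_{i}$ in one stroke, whereas your proposal tries to do the first via a $K$-level argument (which fails) and leaves the second as an acknowledged loose end.
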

\begin{proof}
We have $_{G}(RG)_{x}=RGe_{1}\oplus\ldots\oplus RGe_{s}$ where each
summand is an indecomposable $R(G\times C)$-module.
By \eqref{eq1}, and the Krull--Schmidt theorem, 
$_{G}(RG)_{u}\cong RGf_{1}\oplus\ldots\oplus RGf_{t}$ where the $f_{i}$
are taken from the set $\{e_{1},\dotsc,e_{s}\}$ (some of them may be 
equal), and each $RGf_{i}$ is considered as a submodule of $_{G}(RG)_{x}$.
Thus as $RG$-modules,
\begin{equation}\label{equat2}
RGe_{1}\oplus\ldots\oplus RGe_{s}=RG\cong RGf_{1}\oplus\ldots\oplus RGf_{t}.
\end{equation}
Renumbering if necessary, we can assume that $RGe_{1}$ and $RGf_{1}$ as
$RG$-modules have a direct summand in common. Then 
$f_{1}^{}=e_{1}^{g_{1}v}$ for some $g_{1}\in G$ and $v\in (RL)^{\times}$,
by Lemma~\ref{elfact3}. Obviously, $RGe_{1}^{g_{1}v}\cong RGe_{1}^{g_{1}}$
as submodules of $_{G}(RG)_{x}$.
Cancelling the isomorphic summands $RGe_{1}$ and $RGf_{1}$ in 
(\ref{equat2}) and continuing this way yields the isomorphism
$_{G}(RG)_{u}\cong\bigoplus_{i}RGe_{i}^{g_{i}}$ stated in the lemma. 
Finally, note that $\sum_{j}e_{j}x^{g_{j}^{-1}}$ is a torsion unit of $RG$
of order dividing the order of $x$ (since each $e_{j}$ commutes with
$x^{g_{j}^{-1}}$), and that there is an isomorphism
\newlength{\higgs}
\settoheight{\higgs}{${\sum_{j}e_{j}x^{g_{j}^{-1}}}$}
\[ _{G}(RG)_{u}\cong\bigoplus_{i=1}^{s}{}_{G}(RGe_{i}^{g_{i}})_{x}\cong 
\bigoplus_{i=1}^{s}{}_{\rule{0pt}{\higgs}G}(RGe_{i})_{\sum_{j}e_{j}x^{g_{j}^{-1}}} 
={}_{\rule{0pt}{\higgs}G}(RG)_{\sum_{j}e_{j}x^{g_{j}^{-1}}} \]
since an isomorphism between the summand 
$RGe_{i}^{g_{i}}$ of $_{G}(RG)_{x}$ and the summand $RGe_{i}$ of 
$_{\rule{0pt}{\higgs}G}(RG)_{\sum_{j}e_{j}x^{g_{j}^{-1}}}$ 
is given by right multiplication with $g_{i}^{-1}$. So $u$ is conjugate 
to $\sum_{j}e_{j}x^{g_{j}^{-1}}$ by a unit of $RG$.
\end{proof}

Suppose that $\cen{G}{x}$ has a normal $p$-complement which is normal
in $G$. So for $L=\opns{\cen{G}{x}}$, the quotient
$\cen{G}{x}/L$ is a $p$-group, and $L$ is a normal subgroup of $G$.
Note that this holds when $G$ is a Frobenius group
(then $L$ is the Sylow $p$-complement of the Frobenius kernel).
Then, if $R$ is suitably chosen, a primitive idempotent in $RL$ remains 
primitive in $R\cen{G}{x}$,
by Green's indecomposability theorem (see \cite[(19.23)]{CuRe:81}), 
and the lemma yields a conjugate of $u$ within the units of $RG$
of a rather specific shape (which is likely to lie within $R\cen{G}{x}$). 

\section{Proof of the theorem}\label{sec:proof}

We fix some notation for the rest of the paper.
Let $G$ be a Frobenius group with Frobenius kernel $K$.
Let $p$ be a prime divisor of $|K|$.
We write $K=N\times L$ with $N$ being the Sylow $p$-subgroup of $G$
and $L$ the Sylow $p$-complement of $N$ in $K$. 
Let $R$ be a $p$-adic ring containing a primitive $|L|$th root of unity.
Then the quotient field $E$ of $R$ is a splitting field for $L$.
Since $p$ does not divide $|L|$, the group ring $RL$ is isomorphic
to a direct sum of full matrix rings over $R$
(see \cite[V.5.7, V.5.12, V.12.5]{Hup:67}).
Let $\irr{L}$ be the set
of all irreducible ordinary characters of $L$ (with values in $E$).
Let $1=e_{1}+\ldots+e_{s}$ be an orthogonal decomposition of $1$ into
primitive idempotents of $RL$.

Even though we will make no use of it, 
we mention that for each idempotent $e_{i}$ 
distinct from the idempotent belonging to the principal character of $L$,
the $RG$-lattice $RGe_{i}$ is indecomposable (projective). For $e_{i}$ 
is primitive in $EL$, and when $H$ denotes a Frobenius complement 
in $G$, then $E(HL)e_{i}$ is a simple $E(HL)$-module (see, e.g., 
\cite[(6.34)]{Isa:94}). So $R(HL)e_{i}$ is an indecomposable 
$R(HL)$-lattice. 
This means that the image of $e_{i}$ under the natural map 
$RG\rightarrow RG/N=R(HL)$ is a primitive idempotent. But the 
kernel of this map is contained in the radical of $RG$, and so $e_{i}$
is primitive in $RG$ (see \cite[(5.26), (6.8)]{CuRe:81}).

For $m\in RG$ and $g\in G$, we write $\paug{G,g}{m}$ for the partial 
augmentation of $m$ with respect to the conjugacy class of $g$
in $G$. That is, when $m$ is written as an $R$-linear combination of
elements of $G$, then $\paug{G,g}{m}$ is the sum of the coefficients of 
the elements of the conjugacy class of $g$ in $G$.
Notation is chosen to emphasize that the partial augmentation 
is taken with respect to a conjugacy class of $G$,
since we shall also consider partial augmentations $\paug{L,g}{m}$
when $m\in RL$ and $g\in L$.
Note that $\varepsilon_{G,g}$ is a so-called trace function, that is, 
$\varepsilon_{G,g}$ is $R$-linear, and 
$\paug{G,g}{m_{1}m_{2}}=\paug{G,g}{m_{2}m_{1}}$ for all
$m_{1},m_{2}\in RG$.
For $m\in RG$, we shall write $\supp{m}$ for the support of $m$, which 
is the set of elements of $G$ whose coefficients in $m$ (again written
as above) are non-zero.

The following simple fact depends on $G$ being a Frobenius group.
For $1\neq x\in N$ and $a,a^{\prime}\in L$, the elements 
$xa$ and $xa^{\prime}$ are conjugate in $G$ if and only if 
the elements $a$ and $a^{\prime}$ are conjugate in $L$, and so
\begin{equation}\label{eq2}
\text{$\paug{G,xa}{xm}=\paug{L,a}{m}$ for all $1\neq x\in N$, $a\in L$
and $m\in RL$.}
\end{equation}
(For if $a^{\prime}\in\supp{m}$, then $(xa)^{g}=xa^{\prime}$ for some 
$g\in G$ if and only if $a^{h}=a^{\prime}$ for some $h\in L$, and the
coefficient of $xa^{\prime}$ in $xm$ is the same as the
coefficient of $a^{\prime}$ in $m$.)

This leads us to our first intermediate result. Note that
for a primitive idempotent $e$ of $RL$, we have $\lambda(e)=0$ 
for all $\lambda$ in $\irr{L}$ except one, when $\lambda(e)=1$.
We already noted in \S\,\ref{sec:indec} that
$_{G}(RG)_{x}=\bigoplus_{i=1}^{s}RGe_{i}$ is a direct sum
decomposition into indecomposable lattices.
Also note that if $e$ and $f$ are primitive idempotents of $RL$ and
$\lambda(e)=\lambda(f)$ for all $\lambda\in\irr{L}$,
then $e$ and $f$ are conjugate by unit in $RL$, and so
$RGe$ and $RGf$ are isomorphic summands of $_{G}(RG)_{x}$.

\begin{claim}\label{cla1}
Any torsion unit in $\ZZt G$ which maps to the identity under the natural
map $\ZZt G\rightarrow\ZZt G/N$ is conjugate to an element of $N$ 
by a unit in $RG$.
\end{claim}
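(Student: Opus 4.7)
The plan is to apply Lemma~\ref{lem1} to $u$ to get a specific conjugate, then use the bimodule formalism together with a partial-augmentation analysis to show that this conjugate is already $RG$-conjugate to $x$ itself.

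By the results recalled in Section~\ref{sec:indec}, $u$ has $p$-power order, is $\QQ G$-conjugate to some $x\in N$, and satisfies~\eqref{eq1}; we may assume $x\neq 1$. Since $G$ is a Frobenius group with kernel $K=N\times L$, the centralizer $\cen{G}{x}\subseteq K$ equals $\cen{N}{x}\times L$, and $L=\opns{\cen{G}{x}}$ is normal in $G$ (being characteristic in $K$). Green's indecomposability theorem ensures that $1=e_{1}+\dotsb+e_{s}$ is still an orthogonal decomposition into primitive idempotents of $R\cen{G}{x}$, so Lemma~\ref{lem1} applies and provides $g_{1},\dotsc,g_{s}\in G$ such that $u$ is conjugate in $RG$ to
\[ y=\sum_{j}e_{j}x^{g_{j}^{-1}}. \]

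By the correspondence of Section~\ref{sec:red}, proving $_{G}(RG)_{y}\cong{}_{G}(RG)_{x}$ will finish the proof. Right multiplication by $g_{i}^{-1}$ provides an $R(G\times C)$-isomorphism $(RGe_{i}^{g_{i}})_{x}\cong(RGe_{i})_{x^{g_{i}^{-1}}}$, and combining with the decomposition implicit in Lemma~\ref{lem1} identifies $_{G}(RG)_{y}$ with $\bigoplus_{i}(RGe_{i}^{g_{i}})_{x}$, a direct sum of indecomposable summands of $_{G}(RG)_{x}=\bigoplus_{i}(RGe_{i})_{x}$. The isomorphism class of $(RGe_{i}^{g_{i}})_{x}$ depends only on the irreducible character $\lambda_{i}^{g_{i}^{-1}}$ of $L$ afforded by $e_{i}^{g_{i}}$ (where $\lambda_{i}$ denotes the character attached to $e_{i}$), since any two primitive idempotents of $RL$ affording the same character are conjugate by a unit of $RL$, which centralizes $x$. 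By Krull--Schmidt the claim therefore reduces to the multiset identity $\{\lambda_{i}^{g_{i}^{-1}}\}_{i}=\{\lambda_{i}\}_{i}$ of characters of $L$.

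To establish this identity I would use $\varepsilon_{G,t}(y)=\varepsilon_{G,t}(x)$ for all $t\in G$, a consequence of $y\sim_{\QQ G}x$; in particular $\varepsilon_{G,xa}(y)=0$ for every $a\in L\setminus\{1\}$. Expanding $y=\sum_{j}x^{g_{j}^{-1}}e_{j}$, applying~\eqref{eq2} to each term, and using $\cen{G}{x}\subseteq K$ to describe $G$-conjugacy inside $K=N\times L$, one rewrites these partial augmentations as sums of the form $\sum_{j}\varepsilon_{L,g_{j}ag_{j}^{-1}}(e_{j})$. Grouping by the $\lambda$-blocks $I_{\lambda}$ and invoking orthogonality of characters of $L$ turns the vanishing statement, together with the correct value $\varepsilon_{G,x}(y)=1$, into the character identity $\sum_{j}\lambda_{j}^{g_{j}^{-1}}=\sum_{\mu}\mu(1)\mu$ (the regular character of $L$). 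Matching multiplicities on both sides then yields the required multiset equality.

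The main obstacle is this last character-theoretic step, particularly when $L$ is non-abelian: individual primitive idempotents $e_{j}$ are not class functions on $L$, so one has to combine the partial augmentations of $y$ at all elements of the form $x'a$ with $x'\in x^{G}$ (not only at $x'=x$), and to track the non-trivial $L$-orbits arising in the supports of the $e_{j}$ after twisting by the $g_{j}$, in order to isolate the contribution of each block $I_{\lambda}$.
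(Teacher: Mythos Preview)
Your approach is essentially the paper's, and everything up to and including the partial-augmentation computation is correct. The perceived obstacle at the end, however, is not real, and the paper dissolves it with one line you are missing.

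You have (correctly, after unwinding your formula)
\[
\varepsilon_{G,xa}(y)=\sum_{j}\varepsilon_{L,a}(e_{j}^{g_{j}})=\varepsilon_{L,a}(m)
\quad\text{for all }a\in L,\qquad m:=\sum_{j}e_{j}^{g_{j}}\in RL.
\]
Since $y$ is $\QQ G$-conjugate to $x$, the left side equals $1$ for $a=1$ and $0$ otherwise. Hence every $L$-partial augmentation of $m$ agrees with that of $1$, i.e.\ $m\in 1+[RL,RL]$. Characters vanish on commutators, so $\lambda(m)=\lambda(1)$ for every $\lambda\in\irr{L}$. But $\lambda(m)=\sum_{j}\lambda(e_{j}^{g_{j}})$ simply counts the indices $j$ with $\lambda_{j}^{g_{j}^{-1}}=\lambda$, while $\lambda(1)$ counts the indices $j$ with $\lambda_{j}=\lambda$. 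This is exactly your multiset identity $\{\lambda_{j}^{g_{j}^{-1}}\}=\{\lambda_{j}\}$, with no need to treat the $e_{j}$ individually or to look at other $x'\in x^{G}$ (which, being $G$-conjugate to $x$, would anyway yield the same $G$-partial augmentations). The non-abelian case of $L$ causes no extra difficulty: the whole point is to sum first and evaluate characters on $m$, not on the individual idempotents.
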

\begin{proof}
Let $u$ be such a torsion unit, $u\neq 1$, and let $x$ be an element in 
$N$ to which $u$ is conjugate by a unit in $\QQ G$ (such an element
exists, as recorded in \S\,\ref{sec:indec}).
By Lemma~\ref{lem1}, and the remark following its proof, there exist
$g_{1},\dotsc,g_{s}\in G$ such that $u$ is conjugate to 
$\sum_{j}e_{j}x^{g_{j}^{-1}}$ by a unit in $RG$.
Set $m=\sum_{j}e_{j}^{g_{j}}$. Let $a\in L$. We have
$\paug{G,xa}{u}=\sum_{j}\paug{G,xa}{e_{j}x^{g_{j}^{-1}}}=
\sum_{j}\paug{G,xa}{e_{j}^{g_{j}}x}=
\paug{G,xa}{xm}=\paug{L,a}{m}$ by \eqref{eq2}.
Since $u$ is conjugate to $x$ by a unit in $\QQ G$, we have
$\paug{G,x}{u}=1$ and $\paug{G,xa}{u}=0$ if $a\neq 1$.
Thus $m\in 1+{}_{R}\lara{gh-hg\mid g,h\in L}$ and so
$\sum_{j}\lambda(e_{j}^{g_{j}})=\lambda(m)=\lambda(1)$ 
for all $\lambda\in\irr{L}$. It follows that
\[ _{G}(RG)_{u}\cong RGe_{1}^{g_{1}}\oplus\ldots\oplus RGe_{s}^{g_{s}}
\cong RGe_{1}\oplus\ldots\oplus RGe_{s}= {}_{G}(RG)_{x} \]
where all the summands are considered as submodules of
$_{G}(RG)_{x}$ (the first isomorphism is stated in Lemma~\ref{lem1}). 
This is what we wanted to prove. 
\end{proof}

We proceed to prove the theorem. Let $w$ be a torsion unit of $\ZZt G$
which maps to the identity under the natural map 
$\ZZt G\rightarrow\ZZt G/K$. We wish to show that $w$ is conjugate to 
an element of $K$ by a unit in $\ZZt G$. We can assume that its 
$p$-part $u$ (say) is $\neq 1$. By the general facts recorded in
\S\,\ref{sec:red}, it then suffices to show
that $w$ is conjugate to an element of $K$ by a unit in $RG$.
In view of Claim~\ref{cla1}, we can also assume that its 
$p^{\prime}$-part $y$ (say) is $\neq 1$, and by induction on the number of 
prime divisors of the order of $w$, we can assume that $y\in L$.
Note that $w=uy=yu$. 

By Claim~\ref{cla1}, $u=x^{\mu}$ for some $x\in N$ and a unit $\mu$
in $RG$. We next observe that $\mu$ can be chosen such that $y^{\mu}$
is contained in $RL$, by choosing a suitable isomorphism 
$_{G}(RG)_{u}\cong {}_{G}(RG)_{x}$. As $y\in (RG)^{\lara{u}}$,
the orthogonal decomposition of $1$ into the sum of the
primitive idempotents of $R\lara{y}$ can be refined to 
an orthogonal decomposition $1=\nu_{1}+\ldots +\nu_{t}$ with
primitive idempotents $\nu_{j}$ of $(RG)^{\lara{u}}$. 
Then $_{G}(RG)_{u}=\bigoplus_{j=1}^{t}RG\nu_{j}$ is a direct sum
decomposition into indecomposable lattices. Furthermore, right 
multiplication with $y$ on $RG$ leaves the summands fixed, the action of
$y$ on each of them given by multiplication with a root of unity in $R$.
Since $_{G}(RG)_{u}$ and $_{G}(RG)_{x}$ are isomorphic, 
the Krull--Schmidt theorem implies that $s=t$, and that 
an isomorphism between the two lattices, which in any case is given by
multiplication with a unit $\mu$ in $RG$ such that $x=u^{\mu}$,
can be chosen such that each $RG\nu_{j}$ is mapped to some $RGe_{i}$. 
Then $y^{\mu}$ acts on each $RGe_{i}$ from right by multiplication 
with a root of unity $\zeta_{i}$. 
For if $RG\nu_{j}$ is mapped to $RGe_{i}$ under multiplication with
$\mu$, then $\nu_{j}^{\mu}=e_{i}$, and 
$e_{i}y^{\mu}=e_{i}\mu^{-1}y\mu=\mu^{-1}(\nu_{j}y)\mu=
\zeta_{i}\nu_{j}^{\mu}=\zeta_{i}e_{i}$
where $\zeta_{i}$ is the root of unity with $\nu_{j}y=\zeta_{i}\nu_{j}$.
Consequently, 
$y^{\mu}=\sum_{i}\zeta_{i}e_{i}\in RL$ and 
$w^{\mu}=(uy)^{\mu}=xy^{\mu}\in x(RL)$.
Note that this implies that if $\paug{g}{w}\neq 0$ for some $g\in G$,
then $g$ is conjugate to $xa_{0}$ for some $a_{0}\in L$. Furthermore,
by \eqref{eq2}, 
\begin{equation}\label{eq3}
\paug{G,xa}{w}=\paug{L,a}{y^{\mu}} \text{ for all } a\in L.
\end{equation}

\begin{claim}\label{cla2}
Suppose that $w$ is conjugate to an element $k$ of $K$ by a unit in
$EG$. Then $w$ is also conjugate to $k$ by a unit in $RG$.
\end{claim}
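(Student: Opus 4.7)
The plan is to show that the $p'$-part $y^\mu\in RL$ of $w^\mu$ is $(RL)^\times$-conjugate to a suitable element $a\in L$ with $k$ being $G$-conjugate to $xa$. Since $[N,L]=1$, any such conjugator commutes with $x$ and therefore also conjugates $w^\mu=xy^\mu$ to $xa$. Chaining with the conjugation $w\mapsto w^\mu$ by $\mu$ and with the $G$-conjugation from $xa$ to $k$ then gives an $RG$-unit conjugating $w$ to $k$.

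For the reduction to $k=xa$: $E$-conjugacy preserves the trace functions $\paug{G,g}{\cdot}$, hence $\paug{G,g}{w^\mu}=\paug{G,g}{k}$ for all $g\in G$. Since $\supp{w^\mu}\subseteq xL$, only $G$-classes meeting $xL$ can support a non-zero partial augmentation of $k$; thus $k$ is $G$-conjugate to some $xa$ with $a\in L$, and it suffices to prove $w^\mu$ is $RG$-conjugate to this element.

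The crucial step is to establish $\paug{L,a'}{(y^\mu)^r}=\paug{L,a'}{a^r}$ for every $r\geq 1$ and every $a'\in L$. For any $r$ with $x^r\neq 1$, the equality $\paug{G,x^ra'}{(w^\mu)^r}=\paug{G,x^ra'}{k^r}$ (which holds since $(w^\mu)^r$ and $k^r$ are $EG$-conjugate) combined with \eqref{eq2}, applied to both $(w^\mu)^r=x^r(y^\mu)^r$ and $k^r=x^ra^r$, yields the desired identity. For the remaining $r$ (those divisible by $|x|$), I would invoke the Chinese Remainder Theorem, applied to the coprime pair $|x|$ and $m:=|a|=|y^\mu|$, to produce $j\equiv r\pmod{m}$ with $x^j\neq 1$; since $(y^\mu)^j=(y^\mu)^r$ and $a^j=a^r$, the equality propagates. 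Consequently $\lambda((y^\mu)^r)=\lambda(a^r)$ for every $\lambda\in\irr{L}$ and every $r$.

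In each block $M_{n_\lambda}(R)$ of $RL=\bigoplus_\lambda M_{n_\lambda}(R)$ the images of $y^\mu$ and $a$ thus have matching power sums in every power, hence matching characteristic polynomials and identical multisets of eigenvalues. Both elements are semisimple, with eigenvalues being $m$-th roots of unity in $R^\times$; since $1-\zeta\in R^\times$ for any non-trivial $m$-th root of unity $\zeta\in R$ (as $\gcd(m,p)=1$), the spectral idempotents $\prod_{\eta'\neq\eta}(A-\eta')/(\eta-\eta')$ of each matrix already lie in $M_{n_\lambda}(R)$, so both matrices $R$-diagonalize to the same diagonal and are $GL_{n_\lambda}(R)$-conjugate. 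Patching across blocks yields $\sigma\in(RL)^\times$ with $\sigma^{-1}y^\mu\sigma=a$, and then $\sigma^{-1}w^\mu\sigma=xa$ by centrality of $x$ in $RL$. The main obstacle is establishing the equality of partial augmentations for all powers of $y^\mu$ and $a$; the CRT trick handles the case $x^r=1$ which \eqref{eq2} cannot address directly.
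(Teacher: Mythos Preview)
Your proof is correct and follows the same overall route as the paper: reduce to $k=xa$ with $a\in L$, use \eqref{eq2} and \eqref{eq3} to match the $L$-partial augmentations of powers of $y^{\mu}$ with those of $a$, deduce that $y^{\mu}$ is conjugate to $a$ by a unit in $RL$, and then use $[x,RL]=1$ to conclude. The one genuine difference is in the last step: the paper restricts to exponents $i$ not divisible by $p$ (which already hit every residue modulo $m=\lvert y^{\mu}\rvert$, so no CRT patch is needed) and then cites two external results---\cite[Lemma~(37.6)]{Seh:93} for $EL$-conjugacy and \cite[Lemma~2.9]{He:06} to lift this to $RL$---whereas you give a self-contained argument, diagonalizing both $p'$-torsion matrices over $R$ block by block (the key point being that $1-\zeta\in R^{\times}$ for every non-trivial $m$-th root of unity $\zeta$). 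Your version is more elementary and avoids the detour through $EL$; the paper's version is shorter on the page but pushes the work into the cited lemmas.
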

\begin{proof}
We can assume that $k$ lies in the support of $w^{\mu}$, and then 
$k=xa_{0}$ with $a_{0}\in L$. By assumption, $\paug{G,k}{w}$ is 
the only nonzero partial augmentation of $w$.
Suppose that $\paug{L,a}{y^{\mu}}\neq 0$ for some $a\in A$.
Then $\paug{G,xa}{w}\neq 0$ by \eqref{eq3}, so $xa$ is conjugate to
$xa_{0}$ in $G$, and hence $a$ is conjugate to $a_{0}$ in $L$
(as $G$ is a Frobenius group). Thus $\paug{L,a_{0}}{y^{\mu}}$
is the only nonzero partial augmentation of $y^{\mu}$ as an 
element of $RL$. Let us convince ourselves that the same applies to 
powers of $y^{\mu}$. 
Let $a\in L$ and $i$ a natural number not divisible by $p$.
By \eqref{eq3}, which applies analogously to $w^{i}$, and \eqref{eq2},
we have $\paug{L,a}{(y^{\mu})^{i}}=\paug{G,x^{i}a}{w^{i}}=
\paug{G,x^{i}a}{k^{i}}=\paug{G,x^{i}a}{x_{}^{i}a_{0}^{i}}=
\paug{L,a}{a_{0}^{i}}$.
So $\paug{L,a}{(y^{\mu})^{i}}$ is nonzero only if $a$ is conjugate to
$a_{0}^{i}$ in $L$. A standard argument from character theory now shows
that $y^{\mu}$ is conjugate to $a_{0}$ by a unit in $EL$
(see \cite[Lemma (37.6)]{Seh:93}). But then $y^{\mu}$, being of order 
not divisible by $p$, is also conjugate
to $a_{0}$ by a unit $\tau$ in $RL$ (see \cite[Lemma~2.9]{He:06})
and $w^{\mu\tau}=(xy^{\mu})^{\tau}=xa_{0}$.
\end{proof}

It remains to verify the assumption of the claim, that
$w$ is conjugate to an element of $K$ by a unit in $EG$. 
To this aim, we can inductively assume that
the theorem is true for Frobenius groups of order strictly smaller than
$|G|$. Then we can state the following.

\begin{claim}\label{cla3}
Suppose that $x\not\in\zen{N}$. Then $w$ is conjugate to an element of $K$ 
by a unit in $EG$.
\end{claim}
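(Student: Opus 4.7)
The proof proceeds by induction on $|G|$, using the induction hypothesis that the theorem holds for all Frobenius groups of strictly smaller order. Since $x\notin\zen{N}$, the centralizer $C:=\cen{G}{x}=\cen{N}{x}\times L$ is a proper subgroup of the Frobenius kernel $K$; and since $w^{\mu}=xy^{\mu}$ with $x\in C$ and $y^{\mu}\in RL\subseteq RC$, the unit $w^{\mu}$ already lies in $RC$.

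Fix a Frobenius complement $H$ of $G$. The natural candidate for the smaller Frobenius subgroup is $G_{0}:=N_{G}(C)=N_{G}(\cen{N}{x})$ (the second equality using that $L$ is normal in $G$), which plainly contains $w^{\mu}$. In the generic case $G_{0}$ is a proper subgroup of $G$ (which holds provided $\cen{N}{x}$ is not $G$-invariant) and $G_{0}$ inherits a Frobenius structure from $G$ — with kernel $G_{0}\cap K=N_{K}(\cen{N}{x})\supseteq C$ and complement an isomorphic image in $H$ — by the standard principle that a subgroup $G_{0}$ of a Frobenius group satisfying $1\neq G_{0}\cap K\neq G_{0}$ is itself Frobenius. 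Applying the induction hypothesis (in the $R$-coefficient formulation afforded by \S\ref{sec:red}) to the torsion unit $w^{\mu}\in RG_{0}$, which maps to the identity modulo $G_{0}\cap K$, yields that $w^{\mu}$ is conjugate to an element of $G_{0}\cap K\subseteq K$ by a unit in $RG_{0}\subseteq EG$. Composing with the conjugation $w\mapsto w^{\mu}$ by $\mu\in (RG)^{\times}$ then shows $w$ conjugate in $EG$ to an element of $K$, as desired.

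The main obstacle lies in the degenerate configurations where $G_{0}$ fails to be a proper Frobenius subgroup of $G$. If $G_{0}\subseteq K$ — which occurs precisely when no non-identity element of $H$ normalizes $\cen{N}{x}$ — then $G_{0}$ is nilpotent rather than Frobenius, so the induction hypothesis does not apply directly; one must then fall back on a semilocal form of the Zassenhaus conjecture for nilpotent groups, applied to $w^{\mu}\in RG_{0}$. Conversely, if $\cen{N}{x}$ happens to be normal in $G$, so that $G_{0}=G$, one has to select a different proper Frobenius subgroup of $G$ containing $w^{\mu}$ — for instance, $(N'\times L)\rtimes H$ for some proper $G$-invariant subgroup $N'\leq N$ containing $x$ (whose existence requires a separate analysis of the $G$-module structure of $N$, using that $N$ is a nilpotent $p$-group on which $H$ acts).
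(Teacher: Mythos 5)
Your proposal is a genuinely different route from the paper's, and unfortunately it has gaps that I don't think you can close along the lines you sketch.

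The paper's argument passes to the \emph{quotient} $\bar{G}=G/\zen{N}$, not to a subgroup. It first verifies (in a few lines) that $\bar{G}$ is again a Frobenius group, observes that $\bar{w}\in\ZZt\bar{G}$ still maps to $1$ modulo $\bar{K}$, and applies the induction hypothesis on $|G|$ to get $\bar{w}$ conjugate to an element of $\bar{G}$ by a unit in $E\bar{G}$. The crucial point — and the reason the hypothesis $x\notin\zen{N}$ is there — is that $\bar{x}\neq 1$, so the machinery set up before the claims applies verbatim to $\bar{G}$, and one deduces (as in Claim~\ref{cla2}) that $\bar{y}^{\bar\mu}$ is conjugate to some $\bar{a}_{0}$ in $E\bar{L}$. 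Since $\zen{N}\cap L=1$, the canonical map $L\rightarrow\bar{L}$ is an isomorphism, so this statement about $\bar{y}^{\bar\mu}\in R\bar{L}$ pulls back to the same statement about $y^{\mu}\in RL$, giving $w$ conjugate to $xa_{0}$ in $EG$. The quotient by $\zen{N}$ is exactly the device that lets information about $y^{\mu}$ be transported without loss; a subgroup does not offer a comparable device.

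Your subgroup approach has a structural gap that you half-acknowledge with the phrase ``in the $R$-coefficient formulation afforded by \S\ref{sec:red}'' but do not actually supply. The theorem, as stated and as inductively assumed, concerns torsion units in $\ZZt G_{0}$ (semilocalized integral coefficients). The element you want to feed to the induction hypothesis is $w^{\mu}\in RG_{0}$, with coefficients in a $p$-adic ring $R$. Section~\ref{sec:red} of the paper shows how to descend a $\ZZt$-conjugacy question to $\ZZ_{p}$- and then $R$-coefficients; it does \emph{not} give a version of the theorem whose input is an $R$-coefficient torsion unit, and there is no reason why the $G_{0}$-partial augmentations of $w^{\mu}$ should even be rational integers (its support lies in $K$, and $G$-classes in $K$ generally split into several $G_{0}$-classes). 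So the induction hypothesis simply does not apply to $w^{\mu}$. Beyond that, your treatment of the two degenerate configurations is itself incomplete: in the case $G_{0}\subseteq K$ you invoke a ``semilocal form of the Zassenhaus conjecture for nilpotent groups'' applied to an $R$-torsion unit, which is again not something the paper or the cited results provide in that generality; and in the case $G_{0}=G$ the existence of a suitable $N^{\prime}$ does require extra argument (though $N^{\prime}=\cen{N}{x}$ would do, since one is precisely in the case where $\cen{N}{x}$ is $G$-invariant). In short, the essential idea you are missing is to pass to the quotient $G/\zen{N}$ and exploit $\bar{L}\cong L$; the hypothesis $x\notin\zen{N}$ is tailor-made for that reduction, not for cutting down centralizers.
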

\begin{proof}
The proper quotient $\bar{G}=G/\zen{N}$ of $G$ is a Frobenius group.
(Suppose that $(\zen{N}n)^{g}=\zen{N}n$ for $n\in N$ and $g$ of prime
order $r$ in a Frobenius complement of $G$. Then $r\neq p$ and so 
$(zn)^{g}=zn$ for some $z\in\zen{N}$. It follows that $n=z^{-1}$
and $\zen{N}n=\zen{N}$.)
By the induction hypothesis, $\bar{w}$ is conjugate to an element
of $\bar{G}$ by a unit in $E\bar{G}$.
As in the proof of the previous claim, it follows that 
$\bar{y}^{\bar{\mu}}$ is conjugate, for some $a_{0}\in L$,
to $\bar{a}_{0}$ by a unit in $E\bar{L}$.
Since $\bar{L}\cong L$ it follows that 
$y^{\mu}$ is conjugate to $a_{0}$ by a unit in $EL$, and
consequently $w$ is conjugate to $xa_{0}$ by a unit in $EG$.
\end{proof}

Note that for any other prime divisor $q$ of the order of $w$ (distinct
from $p$), the claim also holds with $N$ replaced by the Sylow
$q$-subgroup of $K$, and with $x$ replaced by an element of this 
subgroup to which the $q$-part of $u$ is conjugate by a unit of $EG$. 
Thus we can assume, for the rest of the proof, that $y\in\zen{L}$.

We state a well known result (due to Berman and Higman) about 
vanishing of the $1$-coefficient for certain torsion units, but 
provide the proof anyway.

\begin{lemma}\label{BermanHigman}
Let $H$ be a finite group and $S$ an integral domain of characteristic
$0$. Let $v$ be a torsion unit in $SH$ of augmentation $1$ such that
$0\neq \paug{a}{v}\in\ZZ$ for some $a\in\zen{G}$. Then $v=a$.
\end{lemma}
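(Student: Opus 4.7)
The plan is to reduce to the classical case $a=1$ by a central shift and then run the standard Berman--Higman trace argument. Since $a \in \zen{H}$ is a group element (hence central, of finite order, of augmentation $1$), the element $v' := a^{-1}v$ is again a torsion unit of $SH$ of augmentation $1$, and $\paug{1}{v'} = \paug{a}{v}$ is a nonzero rational integer. It therefore suffices to show: if $v'$ is a torsion unit in $SH$ of augmentation $1$ with $\paug{1}{v'} \in \ZZ\setminus\{0\}$, then $v' = 1$.

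To carry this out, I would consider right multiplication by $v'$ on $SH$, viewed as a free $S$-module with basis $H$. Since right multiplication by any $h \neq 1$ is a fixed-point-free permutation of $H$, the trace of this $S$-linear endomorphism equals $\abs{H}\cdot \paug{1}{v'}$. On the other hand, embedding $S$ in an algebraic closure $\bar{E}$ of its field of fractions, the same endomorphism becomes diagonalizable over $\bar{E}$ (since $(v')^n = 1$ for some $n$), with eigenvalues that are $n$th roots of unity. Thus the trace is a sum of $\abs{H}$ complex numbers of absolute value $1$, giving $\abs{H}\cdot\abs{\paug{1}{v'}} \leq \abs{H}$, and hence $\abs{\paug{1}{v'}} = 1$.

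Equality in the triangle inequality for unit-modulus complex numbers forces all $\abs{H}$ eigenvalues to coincide with a single root of unity $\zeta$. Therefore right multiplication by $v'$ equals $\zeta \cdot \mathrm{id}$ on $SH$; evaluating at $1 \in H$ yields $v' = \zeta \cdot 1$ in $SH$, which in particular forces $\zeta \in S$. Taking augmentations of both sides gives $1 = \zeta$, so $v' = 1$ and $v = a$, as required.

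There is no real obstacle here: this is the textbook Berman--Higman argument. The only mildly delicate point is the final step, where one must observe that the a priori transcendental-looking scalar $\zeta \in \bar{E}$ is automatically forced into $S$ by the identity $v' = \zeta \cdot 1$ inside $SH$, so that the augmentation computation $\zeta = 1$ makes sense literally in $S$.
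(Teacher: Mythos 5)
Your proof is correct and is essentially the same Berman--Higman argument that the paper gives: shift by the central element $a$ to reduce to $\paug{1}{v'}\neq 0$, compute the trace of right multiplication (the regular representation character) as $\abs{H}\paug{1}{v'}$, diagonalize over an extension, apply the triangle inequality to roots of unity to force equality, conclude $v'$ is a scalar root of unity, and finish by augmentation. The only stylistic difference is that the paper explicitly notes the coefficient field $F$ generated by the entries of $v$ is finitely generated over $\QQ$ and hence embeds into $\CC$ before invoking the triangle inequality, whereas you diagonalize over $\bar{E}$ and then speak of the eigenvalues as ``complex numbers of absolute value $1$''; that gloss is harmless since the eigenvalues are roots of unity lying in $\overline{\QQ}\subset\CC$, but it is worth being as explicit as the paper about where the absolute value is being taken.
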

\begin{proof}
Replacing $v$ by $va^{-1}$ (which can be done since $a$ is central)
we obtain $0\neq \paug{1}{v}\in\ZZ$ 
and have to show that $v=1$. Observe that $v$ is a unit in $FH$, where
$F$ is the finitely generated field obtained from the rational numbers
by adjoining all the non-zero coefficients of $v$. 
Since $F$ is embeddable into the field of complex numbers $\CC$, we may
assume that $v\in\CC H$. Let $\theta$ be the character of the regular 
representation of $H$ on $\CC H$. Then $\theta(v)=\abs{H}\paug{1}{v}$.
Since $v$ is a torsion unit (and hence a representing matrix can be
diagonalized), $\abs{\theta(v)}\leq\theta(1)=\abs{H}$ with equality
if and only if $v$ is a root of unity, and then $v=1$ as $v$ has 
augmentation $1$. That equality must hold follows from the assumption 
on $\paug{1}{v}$.
\end{proof}

Since $\paug{G,y}{y^{\mu}}=1$, there is $g\in G$ such that 
$y^{g}\in\supp{y^{\mu}}$. Then $xy^{g}\in\supp{xy^{\mu}}$. 
Since no distinct $G$-conjugate of $xy^{g}$ lies in the
support of $xy^{\mu}$ as $y^{g}\in\zen{L}$ and $y^{\mu}\in RL$
(remember that $G$ is a Frobenius group), it follows with 
\eqref{eq3} that
$\paug{L,y^{g}}{y^{\mu}}=\paug{G,xy^{g}}{w}=\paug{G,xy^{g}}{xy^{\mu}}\neq 0$.
Hence it follows from Lemma~\ref{BermanHigman} (applied with $H=L$,
$S=R$, $v=y^{\mu}$ and $a=y^{g}$) that $y^{\mu}=y^{g}$ and 
$w^{\mu}=xy^{\mu}=xy^{g}\in K$. The theorem is proved.




\providecommand{\bysame}{\leavevmode\hbox to3em{\hrulefill}\thinspace}
\providecommand{\MR}{\relax\ifhmode\unskip\space\fi MR }
\providecommand{\MRhref}[2]{%
  \href{http://www.ams.org/mathscinet-getitem?mr=#1}{#2}
}
\providecommand{\href}[2]{#2}

\end{document}